\numberwithin{figure}{section}
\numberwithin{figure}{section}
\newtheorem{theorem}{Theorem}[section]
\newtheorem{lemma}[theorem]{Lemma}
\newtheorem{proposition}[theorem]{Proposition}
\theoremstyle{definition}
\newtheorem{definition}[theorem]{Definition}
\newtheorem{remark}[theorem]{Remark}
\numberwithin{equation}{section}
\newcommand{\D}{\mathrm{D}}
\newcommand{\R}{\mathbb{R}}
\newcommand{\N}{\mathbb{N}}
\newcommand{\Ha}{\mathcal{H}}
\newcommand{\beq}{\begin{equation}}
\newcommand{\eeq}{\end{equation}}
\newcommand{\dist}{{\rm dist}}
\newcommand{\id}{\mathrm{id}}
\newcommand{\Div}{\operatorname{div}}
\newcommand{\pa}{\partial}
\newcommand{\medint}{-\kern -,375cm\int}
\newcommand{\medintinrigo}{-\kern -,315cm\int}
\renewcommand{\d}{\mathrm{d}}
\begin{document}

\title[Stationary sets of the mean curvature flow]{Stationary sets of the mean curvature flow with a forcing term}

\author{Vesa Julin}

\author{Joonas Niinikoski}

\keywords{}

\begin{abstract} 
We consider the flat flow solution to the mean curvature equation with  forcing in $\R^n$. 
Our main results states that tangential balls in $\R^n$ under a flat flow with a bounded forcing term will 
experience fattening, which  generalizes the result in \cite{FJM} from the planar case to higher dimensions. Then, as in the planar case, we  characterize stationary sets in $\R^n$ for
a constant forcing term as finite unions of equisize balls with mutually positive distance.
\end{abstract}

\maketitle



\section{Introduction}
In this article we consider the mean curvature flow (MCF) with a bounded forcing term for compact embedded hypersurfaces. By definition this is a family of embedded surfaces $(\Sigma_{t})_{t \in [0,\infty)}$ in $\R^n$, with initial set $\Sigma_0$, and which moves according to the law
\begin{equation} \label{flow}
V_t = -H_{\Sigma_t} + f(t) ,
\end{equation}
where $V_t$ is the normal velocity, $H_{\Sigma_t}$ the mean curvature and $f$ a bounded measurable function. It is well known that the flow may develop singularities for a smooth initial set  when $n \geq 3$ \cite{Gray1} and even in the plane  when $f \neq 0$ \cite{BP}. In order to define the flow over the singular times and in order to define it for rough initial sets, one may define a weak solution by using either the level set formulation \cite{CGG, ES}, the flat flow via the minimizing movement scheme \cite{ATW, LS} or  Brakke's varifold formulation \cite{Bra}. The main issue  is that there is no unique way to define the weak solution and the previous methods may give rise to a different solution. The level-set approach provides a unique \emph{function} which is a solution of the corresponding partial differential equation in the viscosity sense, but its level sets may have positive volume. We call this phenomenon  \emph{fattening}. De Giorgi's minimal and maximal barriers provide essentially the same solution as the level-set approach and in this context the fattening  means that the minimal and the maximal solution do not agree. The fattening may occur instantaneously if the initial set is not regular \cite{BP, ES} or after a finite time for regular initial sets \cite{BP}. In this work we consider the  flat flow of \eqref{flow}, which is a solution  obtained via the minimizing movement scheme as in \cite{ATW, LS}. The flat flow can be defined for rough embedded initial hypersurfaces which are boundaries of sets of finite perimeter. Therefore,  it is more natural in this context to define the flow for sets rather than surfaces.  If the initial set is smooth the flat flow agrees with the classical solution for a short time interval, but in case of fattening it is not clear if it is  uniquely defined. 

Here we study the fattening for the flat flow of  \eqref{flow} in the specific case when the initial set is a union of two  tangent balls. It is well known that in this case the level-set solution produces instantaneously fattening \cite{BP, GK}. We also mention the work  \cite{DLN} where the authors study the same setting but add randomness to the flow. For a general introduction to the topic we refer to \cite{B}.  In our main theorem we generalize the result in \cite{FJM} from the plane to $\R^n$ and prove that the flat flow instantaneously connects the two tangent balls with a thin neck which continues to grow at least for a short period of time.
\begin{theorem}
\label{thm1}
Let $E_0 \subset \R^n$, $n \geq 2$ , be a union of two tangential balls $B(x_1,r)$ and $B(x_2,r)$. Let $(E_t)_t$ be a flat flow with forcing term $f$, which is  bounded by $C_0 \in \R_+$, starting from $E_0$. There exist positive numbers $\delta$, $c_1$ and $c_2$ depending only on $n$, $r$ and $C_0$ such that
for every $t \in (0,\delta)$ the set $E_t$ contains   a dumbbell shaped simply connected set which again contains the balls $B(x_1,r - c_1t)$, $B(x_2, r-c_1t)$ and $B((x_1+x_2)/2,c_2t)$. 
\end{theorem}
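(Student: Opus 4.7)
The plan is to work with the minimizing movement scheme underlying the flat flow. Fix a small time step $h>0$ and let $(E_k^{(h)})_{k\geq 0}$ denote the discrete minimizers of the Almgren--Taylor--Wang functional
\[
\mathcal{F}_h(E;E_{k-1}^{(h)}) = P(E) + \frac{1}{h}\int_{E\triangle E_{k-1}^{(h)}}\dist(x,\partial E_{k-1}^{(h)})\,\d x - \Big(\int_{(k-1)h}^{kh}f(s)\,\d s\Big)|E|,
\]
with $E_0^{(h)}=E_0$; writing $m=(x_1+x_2)/2$, the strategy is to establish the inclusions $B(x_i,r-c_1 t)\subset E_t$ and $B(m,c_2 t)\subset E_t$ uniformly at the discrete level, and then pass to the limit $h\to 0$.

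For the two lateral balls I would use the comparison principle for the discrete scheme together with the explicit evolution of a single ball $B(x_i,R)\subset E_0$ under MCF with forcing, which shrinks at rate at most $(n-1)/R + C_0$. Iterating the comparison step by step yields $B(x_i,r-c_1 kh)\subset E_k^{(h)}$ with $c_1 = (n-1)/r+C_0$ for as long as the radius stays positive.

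The central neck is the heart of the proof and the main obstacle. By rotational symmetry of $E_0$ about the axis through $x_1,x_2$ and reflection symmetry across the perpendicular bisector, both preserved by the ATW scheme, every $E_k^{(h)}$ inherits these symmetries. Near $m$ the boundary $\partial E_0$ consists of two paraboloidal cusps $y_1=\pm|y'|^2/(2r)$ meeting tangentially at the origin, a highly unfavorable configuration: a direct calculation shows that, relative to $E_0$, the competitor $E_0\cup B(m,\rho)$ saves perimeter of order $\rho^{n-1}$ against a volume cost of $O(\rho^{n+1}/r)$ and a distance penalty of $O(\rho^{n+3}/(r^2h))$, so $\mathcal{F}_h$ is strictly decreased for all $\rho$ up to scale $(r^2h)^{1/4}$. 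I would turn this into an inductive containment statement: assuming $B(m,c_2(k-1)h)\subset E_{k-1}^{(h)}$ for appropriately small $c_2=c_2(n,r,C_0)$, if $B(m,c_2 kh)\not\subset E_k^{(h)}$ then the competitor $E_k^{(h)}\cup B(m,c_2 kh)$ strictly decreases the ATW energy, contradicting the minimality of $E_k^{(h)}$.

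Combining the two containments with the preserved axial symmetry produces a connected axisymmetric subset of $E_k^{(h)}$ containing all three balls together with the segment $[x_1,x_2]$; this subset is simply connected and has the required dumbbell shape. Passing to the limit $h\to 0$ via the standard compactness of the discrete flows then yields the theorem for $t\in(0,\delta)$, with $\delta$ chosen small enough that both $r-c_1 t>0$ and the cusp-based competitor estimate remain valid. The main difficulty will be the neck-formation step: the perimeter/distance computation has to be carried out against $E_{k-1}^{(h)}$ rather than against the idealized $E_0$, so one must control how much the paraboloidal cusp regularizes in a single ATW step and ensure that enough of the cusp geometry survives throughout $(0,\delta)$ to keep driving the linear-in-time growth of the central ball.
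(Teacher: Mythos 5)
Your treatment of the two lateral balls (discrete comparison with shrinking balls) and your first-step computation for the neck are both essentially the paper's Step 1: the cusp at $m$ makes the perimeter saving $\sim\rho^{n-1}$ beat the volume and dissipation costs up to the scale $\rho\sim h^{1/4}$, which is exactly the size of the cylinder $\mathbf C(\alpha h^{1/4},\alpha h^{1/2})$ the paper inserts into $E^{h,1}$. (One technicality you elide: the energy comparison must be made against the minimizer $E^{h,1}$, not against $E_0$, so you need a lower bound on $\Ha^{n-1}(\partial E^{h,1}\cap B(m,\rho))$; the paper gets this from the Schwarz symmetry of every minimizer, Proposition \ref{symmetry}, which in turn requires an argument — uniqueness is unknown, so ``symmetry is preserved by the scheme'' is not automatic.)

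The genuine gap is your inductive step for the growth of the neck, and it is not a technicality — the proposed mechanism fails. After the first step the cusp is gone, and the set near $m$ is (at worst) a thin axisymmetric neck of radius $\rho_{k-1}$. If you now test minimality of $E^{h,k}$ with the competitor $E^{h,k}\cup B(m,c_2kh)$ with $c_2kh\gg\rho_{k-1}$, the perimeter \emph{increases}: you add essentially the whole sphere $\partial B(m,c_2kh)$ minus two polar caps, of area $\sim(c_2kh)^{n-1}$, while you only remove the lateral surface of the neck inside the ball, of area $\sim\rho_{k-1}^{\,n-2}\,c_2kh$, which is much smaller. The dissipation and forcing terms are of lower order and cannot rescue this. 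So no contradiction with minimality arises, and the competitor argument gives no growth at all beyond the initial $h^{1/4}$ scale — which vanishes as $h\to0$. The true driving force for the growth is not a perimeter saving but the large \emph{negative mean curvature} of the thin neck, fed back through the Euler--Lagrange equation $\bar d_{E^{h,k-1}}/h=-H_{E^{h,k}}+\Lambda$: if $|H|\gtrsim\Lambda_0^{1/2}/\rho$ on the neck, the boundary must have moved outward by $\gtrsim\Lambda_0^{1/2}h/\rho$ in one step. This is what the paper's Steps 2--3 implement, by constructing a discrete family of barrier solids of revolution $\mathbf C(\varphi_{h,i},[-d_{h,i},d_{h,i}])$ with concave parabolic profiles, lifting the profile until it touches $\partial E^{h,i}$ at a regular point, and comparing curvatures there. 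In dimension $n\ge3$ there is an additional delicacy your sketch does not see: the azimuthal principal curvatures contribute $+(n-2)/\varphi$ to $H_\Gamma$, so the profile's concavity $a_{h,i}=\Lambda_0^{1/2}/l_{h,i}$ must be tuned (via the choice $\Lambda_0\ge 2^9(n-2)^2$) to dominate this positive term; this is precisely why the single planar barrier of \cite{FJM} does not generalize and a whole barrier flow is needed. To repair your proof you would have to replace the inductive competitor step by such a curvature/touching-point argument.
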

We note that the above result immediately generalizes to the case when the two balls do not have the same radii. This follows from Theorem \ref{thm1} and a standard comparison argument (see Proposition \ref{comp}).

Theorem \ref{thm1} implies that a union of tangent balls cannot be a stationary set of the flow \eqref{flow}. Therefore,  we may use the characterization of critical points of the isoperimetric problem from \cite{DM} to characterize all  stationary points of the flow \eqref{flow}. 
\begin{theorem}
\label{thm2}
A bounded set of finite perimeter $E_0 \subset \R^n$, with $n\geq 2$, is a stationary set of the flow \eqref{flow} (see definition \ref{statdef})  with a positive constant forcing $\Lambda$ exactly when it is a finite union of balls of radius $r=(n-1)/\Lambda$ with mutually positive distance.
\end{theorem}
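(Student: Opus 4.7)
The plan is to prove both directions, combining Theorem~\ref{thm1} with the distributional Alexandrov-type rigidity theorem of Delgadino--Maggi \cite{DM}.

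For the \emph{if} direction one directly verifies that a ball $B(x,r)$ with $r=(n-1)/\Lambda$ has mean curvature $H=(n-1)/r=\Lambda$, so that $V=-H+\Lambda\equiv 0$. More strongly, such a ball is a minimizer of the Almgren--Taylor--Wang step functional $E \mapsto P(E) + h^{-1}\int_{E\triangle E_0} d(x,\partial E_0)\,\d x - \Lambda |E|$ with reference set $E_0$ itself, and a finite disjoint union of such balls with mutually positive distances remains a local minimizer for every $h$ small enough that the components cannot interact. Hence the minimizing movement scheme admits the constant choice $E_k \equiv E_0$, which yields a stationary flat flow.

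For the converse, suppose $E_0$ is stationary. Then the discrete scheme admits iterates $E_k\equiv E_0$, so $E_0$ minimizes the above step functional with reference set $E_0$. The first variation along any smooth compactly supported vector field $X$ is taken; the distance term contributes only at second order in the variation parameter, since $d(\cdot,\partial E_0)$ vanishes on $\partial^* E_0$ and is Lipschitz, so the symmetric difference $E_s\triangle E_0$ is a thin strip on which $d$ is of order $|X\cdot\nu|$. What remains is the Euler--Lagrange identity
$$
\int_{\partial^* E_0} \mathrm{div}_{\partial E_0} X\, \d\Ha^{n-1} = \Lambda \int_{\partial^* E_0} X\cdot\nu \, \d\Ha^{n-1},
$$
so $E_0$ has distributional mean curvature identically equal to $\Lambda$. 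The Delgadino--Maggi theorem \cite{DM} then forces $E_0$ to be, up to a null set, a finite union of balls of radius $r=(n-1)/\Lambda$, though a priori some pairs of these balls may be tangent.

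To rule out tangencies I argue by contradiction: if two of the balls making up $E_0$ were tangent, Theorem~\ref{thm1} applied to that tangent pair, together with the comparison principle of Proposition~\ref{comp} used to accommodate the other (positively separated) components of $E_0$, would produce a flat flow from $E_0$ which for every $t\in(0,\delta)$ strictly contains a neck of positive width $c_2 t$ around the tangency point; this contradicts $E_t=E_0$. Hence all balls in $E_0$ have mutually positive distance, finishing the characterization. The principal obstacle is the Euler--Lagrange step: one must verify carefully that the nonsmooth distance penalty really does drop out of the first variation at the reference set $E_0$ (despite the lack of classical regularity of $\partial E_0$) and that the resulting weak constant mean curvature equation on the reduced boundary is exactly of the type covered by the hypotheses of \cite{DM}.
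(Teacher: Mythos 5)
Your overall architecture (Delgadino--Maggi rigidity plus Theorem \ref{thm1} to exclude tangencies) matches the paper's, and your tangency-exclusion step via Proposition \ref{comp} is essentially the one in the text. However, both main steps of your argument contain genuine gaps.

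For the \emph{if} direction, exhibiting the constant sequence $E_k\equiv E_0$ as \emph{one} admissible choice of minimizers is not enough: Definition \ref{statdef} quantifies over \emph{all} flat flows starting from $E_0$, i.e.\ over all subsequential limits of all admissible approximate flows, and the scheme takes global (not local) minimizers which need not be unique. You must show that for $h$ small \emph{every} minimizer of the step functional with reference set $E_0$ is $E_0$ itself; this is exactly what Proposition \ref{ballminim} (uniqueness of the concentric-ball minimizer, with $r_{\min}=r$ precisely when $\Lambda=(n-1)/r$) together with Remark \ref{union} (decoupling of well-separated components for $h$ below a threshold depending on their mutual distance) supplies.

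For the converse, the step ``$E_0$ stationary $\Rightarrow$ $E_0$ minimizes the one-step functional with reference set $E_0$'' is unjustified and essentially begs the question. Stationarity is a statement about the $h\to 0$ limits of the scheme; for fixed $h>0$ nothing forces $E^{h,1}=E_0$, and you cannot assert that the constant sequence is an admissible choice of minimizers without already knowing that $E_0$ is a union of balls of radius $(n-1)/\Lambda$. Consequently your first-variation computation is applied to the wrong object: the Euler--Lagrange equation \eqref{EulerLagrange} holds on the minimizers $E^{h,k}$, not on $E_0$. (Your incidental worry about the distance penalty is not the real obstacle --- it does drop out at first order since $\bar d_{E_0}=0$ on $\partial^* E_0$; the obstacle is that you have no minimality of $E_0$ to vary.) The paper circumvents this in Lemma \ref{statlemma} by working along the approximate flows: stationarity plus the dissipation estimate and lower semicontinuity give $P(E^{h_i}_t)\to P(E_0)$, whence $\int\int |H_{E^{h_i}_t}-\Lambda|^2\,\d\Ha^{n-1}\d t\to 0$; one then selects times $t_i$ with $H_{E^{h_i}_{t_i}}\to\Lambda$ distributionally and invokes the \emph{stability} version of Delgadino--Maggi, Theorem \ref{convergence} (i.e.\ \cite[Corollary 2]{DM}), applied to the sequence $E^{h_i}_{t_i}$ converging to $E_0$, rather than the rigidity theorem applied directly to $E_0$. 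To repair your proof you would need to reproduce this limiting argument.
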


Let us finally mention a few words about the proof of Theorem \ref{thm1}. We begin the proof as in the planar case \cite{FJM} by showing that any discrete approximation of the flat flow creates at the first step a neck which connects the two balls. After this we need to show that this neck is growing until the time $\delta$.   In the planar case it is enough to  construct a single barrier set  to show that the neck is growing (see \cite[Proof of Theorem 1.1]{FJM}).  In the higher dimensional case we need to construct a family of comparison sets which, together with a delicate comparison argument, implies that the neck is growing.  The novelty of the proof  is the construction of this discrete barrier flow. Similar idea is used in \cite{GK} in the context of level set solutions. The main difference  is that in our case the flow is defined via time discretization.        



\section{Notation and preliminary results}

Let us introduce some basic concepts and notation. First, our standing assumption throughout the paper is that the dimension $n$ is at least two and for $x \in \R^n$ we use the decomposition $x=(x_1,x')$, where $x_1 \in \R$ and $x' \in \R^{n-1}$.
For a given set $E \subset \R^n$ the distance function $d_E : \R^n \rightarrow \R$ is given by 
$d_E(x)= \inf_{y \in E} |x-y|$ and further the signed distance function $\bar d_E : \R^n \rightarrow \R$ is defined as
\[
\bar d_E(x) = 
\begin{cases}
-d_E(x), &x\in E \\
d_E(x), &x \in \R^n\setminus E.
\end{cases}
\]
For the empty set we use the convention that its signed distance function is $\infty$ everywhere. If $E \subset \R^n$ is Lebesgue-measurable, we will denote its $n$-dimensional Lebesgue-measure by $|E|$.

For a set of finite perimeter $E \subset \R^n$ the notation $\pa^* E$ denotes its reduced boundary as usual. Recall, that 
then $\overline{\pa^*E}$ is the support of the corresponding Gauss-Green measure and the perimeter of
$E$ is given by $P(E) = \Ha^{n-1}(\pa^* E)$. If $E$ is $C^1$-regular we have $\pa^* E = \pa E$. Moreover, we may always assume $\pa E = \overline{\pa^*E}$. The measure theoretic outer unit normal is defined in $\pa^*E$ and we  denote it by
$\nu_E$. If $E$ is a $C^1$-set, then $\nu_E$ agrees with the classical outer unit normal of $E$.
Again, for every $C^1$-vector field $\Psi: \R^n \rightarrow \R^n$ the tangential differential at $x$ is defined as
\[
\D_\tau \Psi (x) = \D \Psi(x) (I - \nu_E(x)\otimes\nu_E(x))
\]
and the tangential divergence as $\Div_\tau \Psi = \text{Tr}(\D_\tau \Psi (x))$.

For an orientable $C^2$ -hypersurface $\Sigma \subset \R^n$, with orientation $\nu_\Sigma : \Sigma \rightarrow \pa B(0,1)$,
the corresponding mean curvature $H_\Sigma(x)$ at $x \in \Sigma$ is defined 
as the sum of the principal curvatures $k_1(x),\ldots,k_{n-1}(x)$. If $E \subset \R^n$ is a $C^2$ -set, then $H_E(x)$ for $x \in \pa E$ 
denotes $H_{\pa E} (x)$, with the orientation $\nu_E$, and we have the classical (surface) divergence theorem
\[
\int_{\pa E} \Div_\tau \Psi \ \d \Ha^{n-1} = \int_{\pa E} H_E\langle \Psi, \nu_E \rangle \ \d \Ha^{n-1}
\]
for every $\Psi \in C^1_0(\R^n;\R^n)$. In  general, we say that
a set of finite perimeter $E \subset \R^n$ has a \emph{distributional mean curvature} $H_E \in L^1(\pa^* E)$, if for every $\Psi \in C^1_0(\R^n;\R^n)$ it holds
\beq
\label{distrMC}
\int_{\pa^* E} \Div_\tau \Psi \ \d \Ha^{n-1} = \int_{\pa^* E} H_E\langle \Psi, \nu_E \rangle \ \d \Ha^{n-1}.
\eeq
Note that for $C^2$-regular sets the distributional mean curvature agrees with the classical mean curvature. Finally, we say that 
a set of finite perimeter $E \subset \R^n$ is \emph{critical}, if it has a constant distributional mean curvature. By  \cite[Theorem 1 ]{DM} we know that the critical sets are 
characterized as finite union of balls with equal radius and mutually disjoint interiors. As a consequence, we have the following convergence result, see  \cite[Corollary 2]{DM}.

\begin{theorem}
\label{convergence}
Let $(E_i)_{i=1}^\infty$ be a sequence of sets of finite perimeters  in $\R^n$ with distributional mean curvature $H_{E_i}$, $E \subset \R^n$  a set of finite perimeter with a positive volume and $\Lambda$ 
a positive constant such that 
$|E \Delta E_i| \rightarrow 0$, $P(E_i) \rightarrow P(E)$ and $H_{E_i} \rightarrow \Lambda$ in the distributional sense, i.e., for every $\Psi \in C^1_0(\R^n;\R^n)$ it holds
\[
\lim_{i\rightarrow \infty} \int_{\pa^* E_i}  \Div_\tau \Psi  -\Lambda \langle \Psi, \nu_{E_i} \rangle \ \d \Ha^{n-1} = 0.
\]
Then $E$ is a finite union of balls with the equal radius $r= (n-1)/\Lambda$ and the balls have mutually disjoint interiors.
\end{theorem}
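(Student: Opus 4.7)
The plan is to reduce everything to the characterization theorem \cite[Theorem 1]{DM} cited just before the statement: I intend to pass to the limit in the distributional mean curvature identity and show that the limit set $E$ itself has constant distributional mean curvature equal to $\Lambda$. Once $E$ is known to be critical, \cite[Theorem 1]{DM} forces it to be a finite union of balls of equal radius with mutually disjoint interiors, and the value $r=(n-1)/\Lambda$ is then read off from the classical formula for the mean curvature of a sphere.

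The crucial step is therefore the passage to the limit in
\[
\int_{\pa^* E_i}\Div_\tau \Psi - \Lambda \la \Psi,\nu_{E_i}\ra \,\d\Ha^{n-1} \to 0.
\]
First I would combine the two hypotheses $|E_i\Delta E|\to 0$ and $P(E_i)\to P(E)$ to obtain \emph{strict} convergence of the Gauss--Green measures: $L^1$ convergence of the characteristic functions yields weak-$\ast$ convergence $D\chi_{E_i}\wto D\chi_E$, while $P(E_i) = |D\chi_{E_i}|(\R^n) \to |D\chi_E|(\R^n) = P(E)$ supplies convergence of total variations. Then Reshetnyak's continuity theorem gives, for every bounded continuous $\phi:\R^n\times\Sb^{n-1}\to \R$,
\[
\int_{\pa^* E_i} \phi(x,\nu_{E_i}(x))\,\d\Ha^{n-1}(x) \to \int_{\pa^* E} \phi(x,\nu_E(x))\,\d\Ha^{n-1}(x).
\]
Applied with $\phi(x,\nu)=\la \Psi(x),\nu\ra$ and with $\phi(x,\nu)=\Div \Psi(x) - \la \D \Psi(x)\nu,\nu\ra$, the latter being equal to $\Div_\tau\Psi$ on the reduced boundary since $\D_\tau \Psi = \D \Psi(I-\nu\otimes\nu)$, this lets me pass to the limit term by term and conclude
\[
\int_{\pa^* E} \Div_\tau \Psi \,\d\Ha^{n-1} = \Lambda \int_{\pa^* E}\la \Psi,\nu_E\ra \,\d\Ha^{n-1}
\]
for every $\Psi\in C^1_0(\R^n;\R^n)$, which is exactly \eqref{distrMC} with $H_E\equiv\Lambda$.

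The main obstacle is the \emph{nonlinear} dependence of $\Div_\tau \Psi$ on the normal: with mere weak-$\ast$ convergence of $D\chi_{E_i}$ one could pass to the limit only in expressions linear in $\nu_{E_i}$, such as the forcing term. This is precisely why the statement requires the additional hypothesis $P(E_i)\to P(E)$, which upgrades weak-$\ast$ to strict convergence and thereby activates Reshetnyak's theorem on the quadratic-in-$\nu$ piece $\la \D \Psi \nu,\nu\ra$. All remaining issues are routine: the positivity of $|E|$ ensures that the resulting union of balls is nonempty, and the constancy of the mean curvature on each ball pins down the common radius to $(n-1)/\Lambda$.
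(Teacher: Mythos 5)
Your argument is correct and coincides with the intended one: the paper does not prove Theorem \ref{convergence} itself but quotes it as \cite[Corollary 2]{DM}, whose proof is exactly your combination of Reshetnyak's continuity theorem (made available by the strict convergence of the Gauss--Green measures, i.e.\ $L^1$-convergence plus $P(E_i)\to P(E)$) with the characterization of critical sets in \cite[Theorem 1]{DM}. The only point worth recording is that \cite[Theorem 1]{DM} also requires $|E|<\infty$, which is automatic here since the $E_i$ have finite measure and $|E\Delta E_i|\to 0$.
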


We will use \emph{solid of revolutions} which are obtained  by rotating a non-negative function around the $x_1$-axis
in $\R^n$. If $g$ is a  non-negative function defined on an interval $[a,b]$, we will denote by $\mathbf C(g,[a,b])$ the solid of 
revolution 
\[
\mathbf C(g,[a,b]) : = \{x \in \R^n : x_1 \in [a,b], \ x' \in \bar B^{n-1}(0,g(x_1))\}.
\] 
Again, by the \emph{heads} of $\mathbf C(g,[a,b])$  we mean the vertical part of the boundary
\[
\{x \in \R^n : x_1 \in \{a,b\}, \ x' \in \bar B^{n-1}(0,g(x_1))\}.
\] 
In the special case of a cylinder, symmetric to the hyperplane $\{x_1=0\}$, i.e.,  $g \equiv R >0$  and $b=-a$, we simply denote 
$\mathbf C(R,a) = \mathbf C(R,[-a,a]) $.
In the case, where $g$ is continuous on $[a,b]$ and vanishes at the endpoints, we make the following technical observation.  
\begin{remark}
\label{radialsection}
Suppose that $g \in C([a,b])$ is non-negative with $g(a)=0=g(b)$ and denote 
$E=\mathbf C(g,[a,b])$. Then for every $x_1 \in \R$ the section $\bar d_E(x_1, \ \cdot \ ) : \R^{n-1} \rightarrow \R$ is 
radially symmetric function strictly increasing in radius.
\end{remark}
If $g \in C([a,b]) \cap C^2((a,b))$ and is strictly positive, then for the surface of revolution
\[
\Gamma = \{(x_1,x') \in \R^n : x_1 \in (a,b), x' \in \partial B^{n-1}(0,g(x_1))\}
\]
with the inside-out orientation of  $\mathbf C(g,[a,b])$ one computes
\beq
\label{revolutionMC}
H_\Gamma (x) = -\frac{g''(x_1)}{(1+g'(x_1)^2)^\frac32} + \frac{1}{(1+g'(x_1)^2)^\frac12}\frac{n-2}{g(x_1)}
\eeq
for every $x \in \Gamma$. 

 A solid of revolution $\mathbf C(g,[a,b])$ is an example of a \emph{Schwarz symmetric} set. Recall, that for every measurable  set $E \subset \R^n$  its Schwarz symmetrization, or $(n-1)$-dimensional Steiner symmetrization,   with respect  to a direction $e \in \partial B(0,1)$ is a measurable set  $E^*_e $ such that for every $t \in \R$ the section $\{ z \in \langle e \rangle^\perp : te + z \in E^*_e \}$ is an open $(n-1)$-dimensional ball centered at the origin and  it holds
\[
\Ha^{n-1}(\{ z \in \langle e \rangle^\perp : te + z \in E \}) = \Ha^{n-1}(\{ z \in \langle e \rangle^\perp : te + z \in E^*_e \}) .
\]
Note that $|E^*_e| = |E|$ and if $E$ is a set of finite perimeter, then $E^*_e$ is also a set of finite perimeter and $P(E^*_e) \leq P(E)$ \cite{BCF}. A set $E$  is Schwarz symmetric with respect to $e$ if it holds $E^*_e = E$, up to a set of measure zero.

\section{Flat flows with forcing and stationary sets}

Let us first heuristically explain how  a flat flow with a forcing term is obtained via the minimizing movement scheme. Let 
$C_0 \in \R_+$ be a fixed constant and let $f: [0,\infty) \rightarrow \R$ be a measurable function satisfying the condition
\beq 
\label{condition}
\sup_{t\geq 0} |f(t)| \leq C_0.
\eeq
 The function $f$ will act as a time dependent forcing term in the dynamics. Now if $E_0$ is a bounded set of finite perimeter, we define for every $0<h\leq1$ a sequence of a bounded sets of finite perimeter $(E^{h,k})_{k=0}^\infty$, so called \emph{approximative sequence}, inductively by setting
first $E^{h,0} = E_0$ and for $k=0,1,2,\ldots$ we set $E^{h,k+1}$ to be a minimizer of the functional
\beq
\label{functional1}
F \mapsto P(F) + \frac1h \int_{F} \bar d_{E^{h,k}} \ \d x - \bar f(h,k) |F|,
\eeq
where $\bar f(h,k) = \fint_{kh}^{(k+1)h} f (t)\ \d t$. Then we define an \emph{approximate flat flow}  $(E^h_t)_{t\geq 0}$
by setting
\beq
\label{approxflow}
E^h_t = E^{h,k} \ \ \text{for} \ \ kh \leq t < (k+1)h.
\eeq
If there is a subsequence $(h_k)_{k \in \N}$ with $h_k \rightarrow 0$ 
and a family of bounded sets of finite perimeter $(E_t)_{t\geq 0}$
such that $E^{h_k} \rightarrow E_t$ for every $t \geq 0$
in the $L^1$-sense, then  we call $(E_t)_{t\geq 0}$ a \emph{flat flow} with forcing $f$ starting from $E_0$.
An existence of such a cluster point is always guaranteed, see for instance  \cite[Proposition 2.3]{FJM}.

Let us next make the above argument more precise by using the results in \cite{FJM, MSS}. We note that in \cite{MSS} the authors consider flat flow for volume preserving mean curvature flow, but the arguments will remain valid in our setting. 
Our first observation is that the functional in  \eqref{functional1}
may change its values if we perturb the set $E^{h,k}$ by  a set of measure zero due to the distance function. 
In order to use the notion of distance function consistently we define the class
\[
X_n = \{E \subset \R^n: E \ \text{is a bounded set of finite perimeter with} \ \pa E = \overline{\pa^* E}\}.
\]
Recall, that  every (essentially) bounded set of finite perimeter has a $L^1$-equivalent set from  $X_n$.
For given $0<h \leq 1$ and $\Lambda \in [-C_0,C_0]$ we define the functional 
\[
\mathcal F_{h,\Lambda} : X_n \times X_n \rightarrow \R \cup \{\infty\}
\]
by setting
\beq
\label{functional2}
\mathcal F_{h,\Lambda} (F,E) = P(F) + \frac1h \int_F \bar d_E \ \d x - \Lambda |F|.
\eeq
For every $E \in X_n$ the functional $\mathcal F_{h,\Lambda}( \ \cdot \ , E)$ admits a minimizer $E_{\min} \in X_n$,
see \cite[proof of Lemma 3.1]{MSS}.
If $E$ is empty, then $\bar d_E = \infty$ and hence necessarily $E_{\min}$ must be empty too. 
Minimizers have the following distance property, see the proof of \cite[Lemma 2.1]{LS} (or \cite[Proposition 3.2]{MSS}).
There is a positive constant  $\gamma = \gamma(n,C_0)$ such that for every $E \in X_n$ and every minimizer
$E_{\min} \in X_n$ of $ \mathcal F_{h,\Lambda}(\ \cdot \ , E)$ it holds
\beq
\label{distance}
|\bar d_E| \leq \gamma h^\frac12 \ \ \text{in} \ \ E \Delta E_{\min}.
\eeq
Now,  
\eqref{distance} has the following consequence. 
\begin{remark}
\label{union}
Suppose that $E_1,E_2, \ldots,E_k \in X_n$ have a mutually positive distance of at least of $d$. There is a positive $h_d = h_d(n,C_0,d) \leq 1$ such that for any $h \leq h_d$  it holds that any minimizer of
$ \mathcal F_{h,\Lambda}(\ \cdot \ , \bigcup_i E_i)$ must be a union of minimizers of  $\mathcal F_{h,\Lambda}(\ \cdot \ , E_i)$.
\end{remark}
In general, uniqueness of a minimizer of $\mathcal F_{h,\Lambda} (\cdot ,E)$ is not known. However,
the following weak comparison principle holds, see \cite[proof of Lemma 7.2]{CMP}.
\begin{proposition}
\label{comp}
Let $E,E' \in X_n$ and $\Lambda,\Lambda' \in [-C_0,C_0]$, with $\Lambda > \Lambda'$.
\begin{itemize}
\item[(i)] If $E' \subset\subset E$ and $E_{\min},E'_{\min} \in X_n$ are minimizers of 
$\mathcal F_{h,\Lambda}(\ \cdot \ , E)$ and $\mathcal F_{h,\Lambda}(\ \cdot \ , E')$ respectively, then
$|E'_{\min}\setminus E_{\min}| = 0$. 
\item[(ii)] If $E' \subset E$ and $E_{\min},E'_{\min} \in X_n$ are minimizers of 
$\mathcal F_{h,\Lambda}(\ \cdot \ , E)$ and $\mathcal F_{h,\Lambda'}(\ \cdot \ , E')$ respectively, then
$|E'_{\min}\setminus E_{\min}| = 0$. 
\end{itemize}
\end{proposition}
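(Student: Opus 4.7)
The plan is to deploy the standard ``cut and paste'' comparison argument: test the minimality of $E_{\min}$ against its union with $E'_{\min}$, and the minimality of $E'_{\min}$ against its intersection with $E_{\min}$, then sum the two inequalities. Concretely, I would write
\begin{align*}
\mathcal F_{h,\Lambda}(E_{\min},E) &\leq \mathcal F_{h,\Lambda}(E_{\min}\cup E'_{\min},E),\\
\mathcal F_{h,\Lambda'}(E'_{\min},E') &\leq \mathcal F_{h,\Lambda'}(E_{\min}\cap E'_{\min},E'),
\end{align*}
(with $\Lambda'=\Lambda$ in case (i)) and add. Three structural facts collapse this sum. First, submodularity of the perimeter gives $P(E_{\min}\cup E'_{\min})+P(E_{\min}\cap E'_{\min})\le P(E_{\min})+P(E'_{\min})$. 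Second, a direct computation on the symmetric differences yields
\[
\int_{E_{\min}}\!\!\bar d_E+\int_{E'_{\min}}\!\!\bar d_{E'}-\int_{E_{\min}\cup E'_{\min}}\!\!\bar d_E-\int_{E_{\min}\cap E'_{\min}}\!\!\bar d_{E'}=\int_{E'_{\min}\setminus E_{\min}}(\bar d_{E'}-\bar d_E),
\]
and since $E'\subset E$ one easily verifies the pointwise monotonicity $\bar d_{E'}\ge\bar d_E$ on $\R^n$. Third, the volume terms telescope into $(\Lambda-\Lambda')|E'_{\min}\setminus E_{\min}|$.

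Collecting everything, the sum of the minimality inequalities boils down to
\[
0\ge \bigl[P(E_{\min})+P(E'_{\min})-P(E_{\min}\cup E'_{\min})-P(E_{\min}\cap E'_{\min})\bigr]+\frac1h\!\int_{E'_{\min}\setminus E_{\min}}\!(\bar d_{E'}-\bar d_E)+(\Lambda-\Lambda')|E'_{\min}\setminus E_{\min}|,
\]
with each summand on the right nonnegative. Case (ii) is then immediate: $\Lambda-\Lambda'>0$ forces $|E'_{\min}\setminus E_{\min}|=0$. For case (i) the volume term vanishes because $\Lambda=\Lambda'$, so one needs the extra information coming from the \emph{strict} inclusion $E'\subset\subset E$.

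The main technical point is therefore to upgrade $\bar d_{E'}-\bar d_E\ge 0$ to a uniform gap $\bar d_{E'}-\bar d_E\ge\eta$ on all of $\R^n$, where $\eta:=\mathrm{dist}(\overline{E'},\R^n\setminus E)>0$. The geometric content is: any segment realizing $d_{E'}(x)$ from a point $x$ outside $E'$ must, on its way to $\overline{E'}$, first touch $\partial E$, and between $\partial E$ and $\overline{E'}$ there is at least a distance $\eta$; an analogous argument handles $x\in E'$ by reading the signed distance inside $E$ as the distance to the complement. Once this uniform lower bound is in place, the distance term in the displayed inequality dominates $(\eta/h)|E'_{\min}\setminus E_{\min}|$, forcing $|E'_{\min}\setminus E_{\min}|=0$ as required.

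The only genuinely delicate step is establishing the uniform gap $\eta$; the rest is routine once the cut--and--paste competitors are chosen and submodularity of the perimeter is invoked. Everything else---existence of minimizers, the freedom to work in the class $X_n$, and the standard manipulation of $\bar d_E$---has already been recorded earlier in the section, so the proof reduces to organizing these ingredients into the single chain of inequalities above.
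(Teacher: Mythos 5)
Your argument is correct and is precisely the standard cut-and-paste comparison proof that the paper itself invokes by citing the proof of Lemma 7.2 in \cite{CMP}: union/intersection competitors, submodularity of the perimeter, monotonicity $\bar d_{E'}\ge \bar d_E$ (upgraded to a uniform gap $\eta>0$ when $E'\subset\subset E$), and the sign of $(\Lambda-\Lambda')|E'_{\min}\setminus E_{\min}|$. No gaps; this matches the intended proof.
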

Concerning the regularity of a minimizer $E_{\min}$ of \eqref{functional2}, it is not difficult to see that it is a $(\Lambda_0,r_0)$-perimeter minimizer (using the notation from \cite{Ma}) with suitable $\Lambda_0,r_0 \in \R_+$ satisfying $\Lambda_0r_0 \leq 1$. Then it follows \cite[Theorem 26.5 and Theorem 28.1]{Ma} that $\partial^*E_{\min}$ is relatively open in $\pa E_{\min}$,  $C^{1,\alpha}$-regular hypersurface for every $0<\alpha<1/2$ and the (closed) singular part $\pa E \setminus \pa^* E$ has Hausdorff-dimension at most $n-8$. In particular, from now on \emph{we will use 
the convention that the minimizers are always open sets.}

Moreover, by considering local variations $(\Phi_t)_t$ of the form $\Phi_t = \id +t\Psi$, with $\Psi \in C^1_0(\R^n,\R^n)$,
and differentiating $t \mapsto \mathcal F_{h,\Lambda}(\Phi_t(E_{\min}), E)$ at zero we see that $E_{\min}$ has distributional mean curvature $H_{E_{\min}}$ which satisfies the 
Euler-Lagrange equation in the distributional sense
\beq
\label{EulerLagrange}
\frac{\bar d_E}{h} = - H_{E_{\min}}+ \Lambda \ \ \text{on} \ \ \pa^*E_{\min}.  
\eeq
Since $H_{E_{\min}}$ is  Lipschitz continuous on $\pa^*E$, then  by standard elliptic estimates $\pa^*E_{\min}$ is $C^{2,\alpha}$-regular
and \eqref{EulerLagrange} holds in the classical sense on the reduced boundary. In particular, $E_{\min}$ is a $C^{2,\alpha}$-set when $n \leq 7$. Finally, we note that if $\pa E$ satisfies an exterior or interior ball condition at $x$, then $x$ must belong to the regular  part $\pa^*E_{\min}$. This follows essentially from  \cite[Lemma 3]{DM}.


The next proposition states the somewhat obvious fact  that for a ball $E= B(x,r)$ any non-empty minimizer of $\mathcal F_{h,\Lambda} (\cdot ,E)$ must be a concentric ball. 
\begin{proposition}
\label{ballminim}
For a ball $E= B(x,r)$ every minimizer \eqref{functional2} must be an open concentric ball or the empty set. 
There is a positive constant $h_0= h_0(n,C_0)\leq 1$ such that if 
$h \leq h_0$, then every ball $B(x,r)$, with $r \geq (n-1)/C_0$, has a concentric ball $B(x,r_{\min})$ as a unique
minimizer of $\mathcal F_{h,\Lambda}( \ \cdot \ , E)$ and it holds 
\beq
\label{difference}
r_{\min}- r =\left[\Lambda - \frac{n-1}{r} + \mathcal O (h)\right]h.
\eeq
In the case $\Lambda = (n-1)/r$, the error term $\mathcal O (h)$ vanishes and hence $r_{\min} = r$.
\end{proposition}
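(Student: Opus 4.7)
The plan is to prove Proposition \ref{ballminim} in two steps: first reduce any minimizer to a concentric ball via a rearrangement argument, then perform a one-dimensional optimization in the radius.

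For the first step, suppose $F$ is a non-empty minimizer of $\mathcal{F}_{h,\Lambda}(\cdot, E)$ and let $F^* = B(x, r_F)$ be the centered ball with $|F^*| = |F|$. Since $\bar d_E(y) = |y-x| - r$, the equal-volume contributions cancel and the comparison reduces to the radial integral $\int_F |y-x|\, dy$. By the layer-cake identity and the elementary bound $|F \cap \bar B(x,t)| \leq |F^* \cap \bar B(x,t)|$ for every $t \geq 0$, we have $\int_{F^*} |y-x|\, dy \leq \int_F |y-x|\, dy$ with equality iff these intersections coincide for almost every $t$. Combining with the isoperimetric inequality $P(F^*) \leq P(F)$ yields $\mathcal{F}_{h,\Lambda}(F^*, E) \leq \mathcal{F}_{h,\Lambda}(F, E)$; simultaneous equalities then force $F$ to be a ball centered at $x$ (isoperimetric equality makes $F$ a ball, while rearrangement equality fixes its center).

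For the second step, a direct computation yields
\[
\mathcal{F}_{h,\Lambda}(B(x,\rho), E) = \omega_n\left[n\rho^{n-1} + \frac{1}{h}\left(\frac{n\rho^{n+1}}{n+1} - r\rho^n\right) - \Lambda \rho^n\right],
\]
so interior critical points $\rho > 0$ solve the quadratic $\rho^2 - (r + \Lambda h)\rho + (n-1)h = 0$. For $|\Lambda| \leq C_0$ and $r \geq (n-1)/C_0$, choosing $h_0 = h_0(n, C_0)$ small ensures $r + \Lambda h \geq (n-1)/(2C_0)$ and positive discriminant, giving two positive roots $0 < \rho_- < \rho_+$. Sign analysis of the derivative shows $\mathcal{F}$ is increasing on $[0, \rho_-]$, decreasing on $[\rho_-, \rho_+]$ and increasing on $[\rho_+, \infty)$, so among centered balls only $\emptyset$ (value $0$) and $B(x, \rho_+)$ can be minimizers. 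Evaluating at $\rho = r$ gives $\mathcal{F}_{h,\Lambda}(B(x,r), E) = \omega_n r^{n-1}[n - r^2/((n+1)h) - \Lambda r]$, which, using $r \geq (n-1)/C_0$ and $|\Lambda| \leq C_0$, is strictly negative once $h \leq h_0(n, C_0)$; hence $\mathcal{F}(\rho_+) < 0$ and $r_{\min} = \rho_+$ is the unique minimizer.

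Finally, the quadratic formula and Taylor expansion of the square root give
\[
r_{\min} = \frac{r + \Lambda h + \sqrt{(r + \Lambda h)^2 - 4(n-1)h}}{2} = r + \Lambda h - \frac{(n-1)h}{r} + \mathcal{O}(h^2),
\]
which is the formula \eqref{difference}. In the special case $\Lambda = (n-1)/r$, direct substitution of $\rho = r$ into the quadratic yields $r^2 - r(r + (n-1)h/r) + (n-1)h = 0$, so $r$ is an exact root; since the product of roots equals $(n-1)h < r^2$, the companion root is $(n-1)h/r < r$, and therefore $r_{\min} = \rho_+ = r$ exactly. The main technical point is ensuring that every smallness threshold and every error constant depends only on $n$ and $C_0$, which is precisely where the lower bound $r \geq (n-1)/C_0$ enters to control $r + \Lambda h$ uniformly from below.
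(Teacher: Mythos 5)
Your proof is correct. The first half — reducing any non-empty minimizer to a concentric ball by combining the isoperimetric inequality with the layer-cake/rearrangement inequality for the dissipation term, and using that a minimizer forces equality in both — is exactly the paper's argument. In the second half you take a genuinely more self-contained route: the paper rules out the empty set and pins down the radius by invoking the a priori distance estimate \eqref{distance} (a minimizer can only move the boundary by $\mathcal O(h^{1/2})$) together with the Euler--Lagrange equation \eqref{EulerLagrange}, whereas you compute $\mathcal F_{h,\Lambda}(B(x,\rho),E)$ explicitly and do one-dimensional calculus. Note that your critical-point quadratic $\rho^2-(r+\Lambda h)\rho+(n-1)h=0$ is precisely the Euler--Lagrange equation for a concentric ball multiplied by $h\rho$, so both arguments identify the same two roots; the difference lies only in how $\rho_+$ is selected (your sign analysis plus the comparison $\mathcal F_{h,\Lambda}(B(x,r),E)<0=\mathcal F_{h,\Lambda}(\emptyset,E)$ versus the paper's appeal to \eqref{distance}). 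What your version buys is independence from the estimate \eqref{distance} and a built-in existence proof among concentric balls; what it costs is the explicit negativity computation, where — as you correctly flag — one must check that the threshold $h_0$ is uniform over all $r\ge (n-1)/C_0$, which works because $r^2/((n+1)h)$ dominates $n+C_0r$ for every such $r$ once $h$ is small depending only on $n$ and $C_0$. The exact-root observation in the case $\Lambda=(n-1)/r$ is also a clean replacement for the paper's remark that the error term vanishes there.
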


\begin{proof}
The first claim is easy to see  by using the isoperimetric inequality
and the fact that for a given non-zero volume $V$ an open ball of the volume $V$, centered at $x$, is a unique minimizer 
of the energy $\int_F \bar d_{B(x,r)} \ \d y$ among the open sets $F$ of the volume $V$. Again, by using \eqref{distance} we see that if $h$ is sufficiently small compared to the radius, then every minimizer must be 
non-empty and hence a concentric ball. Thus, the uniqueness and \eqref{difference} follow from \eqref{distance} and the Euler-Lagrange equation \eqref{EulerLagrange}.
\end{proof}

Let us denote the Schwarz symmetrization of $E$ with respect to $x_1$-axis simply   by $E^*$. As we mentioned above, Schwarz symmetrization decreases the perimeter and preserves the volume. Moreover, for a smooth set in the case of equality $P(E^*) = P(E)$ it holds that  every vertical slice $E_{x_1} = \{ x' \in \R^{n-1} : (x_1,x') \in E \}$ is $(n-1)$-dimensional ball  \cite{BCF}. We also  notice  that if the  set $E$  is Schwarz  symmetric with respect to $x_1$-axis, then  Schwarz  symmetrization also decreases the dissipation term of $\mathcal{F}_{h,\Lambda}(\ \cdot \ , E)$ defined  in \eqref{functional2}. This follows rather directly from Fubini's theorem.  
For a suitable solid of revolution $E$ around the $x_1$-axis, there is invariance of minimizers under the symmetrization.
\begin{proposition}
\label{symmetry}
If $E= \mathbf C(g,[a,b])$, with a non-negative and continuous $g$ attaining the zero value at the endpoints, 
then every (open) minimizer $F$ of $\mathcal{F}_{h,\Lambda}(\ \cdot \ , E)$ defined  in \eqref{functional2} is Schwarz symmetric with respect to $x_1$-axis.
\end{proposition}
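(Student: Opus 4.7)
My plan is to use the Schwarz symmetrization $F^*$ of the minimizer $F$ as a competitor in the variational problem, and then recover the symmetry of $F$ itself from the fact that each inequality in the comparison must actually be an equality.

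The first step is to establish
\[
\mathcal{F}_{h,\Lambda}(F^*, E) \leq \mathcal{F}_{h,\Lambda}(F, E)
\]
by handling the three terms of the functional separately. Volume is preserved: $|F^*| = |F|$. The perimeter inequality $P(F^*) \leq P(F)$ is the classical Schwarz symmetrization estimate cited from \cite{BCF}. For the dissipation term $h^{-1}\int_F \bar d_E \, \d x$, I would apply Fubini and argue slicewise: by Remark \ref{radialsection}, for every $x_1 \in \R$ the function $x' \mapsto \bar d_E(x_1, x')$ is radially symmetric and strictly increasing in $|x'|$; hence among subsets of $\R^{n-1}$ of prescribed $(n-1)$-dimensional measure, the integral of $\bar d_E(x_1, \cdot)$ is minimized only by the centered ball of that measure. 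Since $F_{x_1}$ and $F^*_{x_1}$ share the same measure by definition of Schwarz symmetrization, this yields
\[
\int_{F^*_{x_1}} \bar d_E(x_1, x')\, \d x' \leq \int_{F_{x_1}} \bar d_E(x_1, x')\, \d x',
\]
and integrating in $x_1$ gives the desired global inequality on the dissipation term.

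Combining all three comparisons, $F^*$ is an admissible competitor with no greater energy than $F$. Minimality of $F$ then forces equality in each individual comparison, and in particular equality in the slicewise dissipation inequality above. Here the \emph{strict} monotonicity in the radial variable is crucial: it promotes equality to a rigidity statement, forcing $F_{x_1}$ to agree with the centered ball $F^*_{x_1}$ up to a Lebesgue null set in $\R^{n-1}$, for almost every $x_1$. By Fubini once more, $|F \Delta F^*| = 0$, which is precisely the notion of Schwarz symmetry stated in the preliminaries.

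The main delicate point is the slicewise dissipation inequality together with its equality case, and this is exactly the purpose of Remark \ref{radialsection}: the hypotheses $g \in C([a,b])$ with $g(a) = g(b) = 0$ are used precisely to guarantee that the signed distance to $E$ is strictly radial-increasing on every hyperplane perpendicular to the $x_1$-axis. The perimeter inequality is used only qualitatively, which conveniently avoids any appeal to Brothers–Ziemer type rigidity for the equality case of the Schwarz perimeter inequality; all the rigidity we need is supplied by the dissipation term alone.
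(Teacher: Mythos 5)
Your proposal is correct and follows essentially the same route as the paper: use $F^*$ as a competitor, note that volume is preserved and perimeter does not increase, and exploit the strict radial monotonicity of $\bar d_E(x_1,\cdot)$ from Remark \ref{radialsection} to get slicewise rigidity in the dissipation term, concluding $|F\Delta F^*|=0$ via Fubini. The paper phrases the rigidity step as a direct contradiction with minimality rather than as an equality case, and additionally uses the openness of $F$ to upgrade the conclusion from almost every slice to every slice, but under the paper's ``up to a set of measure zero'' definition of Schwarz symmetry your argument already suffices.
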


\begin{proof}
 Let $F$ be a such a minimizer. We may assume $F$ to be non-empty. 
Now $P(F^*) \leq P(F)$, $|F^*| = |F|$. By Remark \ref{radialsection} every section $\bar d_E (x_1 ,\ \cdot \ )$ is radially symmetric and strictly increasing in radius which implies via Fubini's theorem that the $(n-1)$-dimensional Lebesgue measure of the symmetric difference $|(F^*)_{x_1} \Delta F_{x_1}|_{n-1}$ of the vertical slices $(F^*)_{x_1}$ and $F_{x_1}$ is zero  for almost every $x_1$,
since otherwise it would hold
\[
\int_{F^*} \bar d_E \ \d x <   \int_F \bar d_E \ \d x  
\]
and hence $\mathcal{F}_{h,\Lambda}(F^* , E)<\mathcal{F}_{h,\Lambda}(F, E)$ contradicting the minimality of $F$. Since $F$ is open, then every vertical slice $F_{x_1} \subset \R^{n-1}$ is open too and then the previous observation quarantees that 
$(F^*)_{x_1} = F_{x_1}$ for almost every $x_1$. Thus, the openess of $F$ implies that the equality holds for every $x_1$.  

\end{proof}


After this discussion we are convinced that an approximative sequence $(E^{h,k})_{k=0}^\infty$, starting from $E_0$, where for every $k=1,2\ldots$, the set $E^{h,k+1}$ is defined as a minimimizer of the functional 
$\mathcal F_{h,\bar f(h,k)}( \ \cdot \ , E^{h,k})$ defined in \eqref{functional2}, is well-defined. Further, we may define the approximative flat flow $(E^h_t)_{t \geq 0}$ as in 
\eqref{approxflow}. We have for every $t \geq h$ that the set $E^h_t$ is open and $C^2$-regular up to a singular part 
$\pa E^h_t \setminus \pa^* E^h_t$ of Hausdorff-dimesion at most  $n-8$. Moreover, $E^h_t$, with $t \geq h$, has a 
distributional mean curvature $H_{E^t_h}$ which satisfies the Euler-Lagrange equation \eqref{EulerLagrange}, with $\Lambda = \bar f(h,\lfloor t/h \rfloor -1)$, in a weak sense and 
on $\partial^*E^h_t$ in the classical sense. For more properties of the approximative flat flows, when the forcing term satisfies \eqref{condition}, such as local Hölder continuity of $(t,s) \mapsto |E^h_t \Delta E^h_s|$ and perimeter control we refer to  \cite[Proposition 2.3]{FJM}.

Next, we define  \emph{stationary sets} of \eqref{flow} with constant forcing term by using  flat flows as in
\cite[Definition 3.1]{FJM}.
\begin{definition}
\label{statdef}
A non-empty set $E_0 \in X_n$ is a stationary set of \eqref{flow} for a constant  forcing term $f \equiv \Lambda >0$, if for any flat flow, starting from $E_0$ it holds
\[
\sup_{0\leq t \leq T} |E_t \Delta E_0| = 0 
\]
for every $T>0$.
\end{definition}

By using Remark \ref{union} and Proposition \ref{ballminim} one may conclude the obvious direction of Theorem \ref{thm2}, that is,
a finite union of equisize balls with a mutually positive distance is a stationary set for  the constant forcing term
$\Lambda = (n-1)/r$, where $r$ is the radius of the balls. In turn, the following lemma states that the converse is almost true, that is, a stationary set is also critical, i.e., a finite union of balls
with equal radius and mutually disjoint interiors.

\begin{lemma}
\label{statlemma}
Every stationary set $E_0 \subset \R^n$ for a positive constant forcing term $\Lambda$, is a finite union of balls of radius $r=(n-1)/\Lambda$ with mutually disjoint interiors. 
\end{lemma}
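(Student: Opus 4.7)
The plan is to apply Theorem~\ref{convergence} to an approximative flat flow $(E^h_t)_{t\ge 0}$ starting from $E_0$ with constant forcing $\Lambda$. Since every subsequential $L^1$-limit of $(E^h_t)$ along some $h_k\to 0$ is a flat flow starting from $E_0$ and hence, by Definition~\ref{statdef}, equals $E_0$, we obtain $L^1$-convergence $E^h_t\to E_0$ (for every fixed $t$) as $h\to 0$. The task then reduces to establishing the remaining two hypotheses of Theorem~\ref{convergence}: perimeter convergence $P(E^h_t)\to P(E_0)$ and distributional convergence $H_{E^h_t}\to \Lambda$.

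For perimeter convergence, I would test the $(k{+}1)$-th minimization problem with the competitor $E^{h,k}$ and use the elementary identity $\int_F \bar d_E\,\d x - \int_E \bar d_E\,\d x = \int_{E\Delta F}|\bar d_E|\,\d x$. This produces the discrete dissipation inequality
\[
P(E^{h,k+1}) + \frac{1}{h}\int_{E^{h,k+1}\Delta E^{h,k}} |\bar d_{E^{h,k}}|\,\d x \le P(E^{h,k}) + \Lambda\bigl(|E^{h,k+1}|-|E^{h,k}|\bigr),
\]
which telescopes to $P(E^h_t)\le P(E_0)+\Lambda(|E^h_t|-|E_0|)$. Stationarity forces $|E^h_t|\to|E_0|$, so $\limsup_{h\to 0} P(E^h_t)\le P(E_0)$, and lower semicontinuity of the perimeter upgrades this to $P(E^h_t)\to P(E_0)$.

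For distributional mean-curvature convergence, I would invoke the $L^2$ dissipation estimate from \cite[Proposition~2.3]{FJM}, namely
\[
\int_h^T\!\!\int_{\pa^*E^h_t} |H_{E^h_t}-\Lambda|^2\,\d\Ha^{n-1}\,\d t \le C\bigl[P(E_0)-P(E^h_T) + |E^h_T|-|E_0|\bigr],
\]
whose right-hand side tends to $0$ by the previous step. Passing to a subsequence, one finds $t\in(0,T)$ with $\|H_{E^h_t}-\Lambda\|_{L^2(\pa^*E^h_t)}\to 0$. Rewriting the distributional identity \eqref{distrMC} as
\[
\int_{\pa^*E^h_t}\!\bigl(\Div_\tau\Psi - \Lambda\langle\Psi,\nu_{E^h_t}\rangle\bigr)\,\d\Ha^{n-1} = \int_{\pa^*E^h_t}(H_{E^h_t}-\Lambda)\langle\Psi,\nu_{E^h_t}\rangle\,\d\Ha^{n-1}
\]
and applying Cauchy--Schwarz bounds the right-hand side by $\|\Psi\|_\infty\|H_{E^h_t}-\Lambda\|_{L^2}\sqrt{P(E^h_t)}\to 0$. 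Theorem~\ref{convergence} then identifies $E_0$ as a finite union of balls of radius $(n-1)/\Lambda$ with mutually disjoint interiors. I expect the main technical point to be this last upgrade from the $L^2$ dissipation bound to distributional mean-curvature convergence, which crucially requires the actual convergence $P(E^h_t)\to P(E_0)$ established in the preceding step, not merely a uniform perimeter bound.
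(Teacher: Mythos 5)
Your proposal is correct and follows essentially the same route as the paper: uniform-in-time $L^1$-convergence of the approximate flows to $E_0$ from stationarity, the telescoped dissipation inequality plus lower semicontinuity to get $P(E^h_t)\to P(E_0)$, the constant-forcing $L^2$ estimate on $H_{E^h_t}-\Lambda$ with a mean-value selection of times, Cauchy--Schwarz to pass to distributional convergence, and finally Theorem~\ref{convergence}. The only imprecision is that the mean value theorem yields times $t_h$ depending on $h$ rather than a single fixed $t$; this is harmless because, as in the paper, the $L^1$- and perimeter convergences hold uniformly over a time interval $[T_1,T_2]$, so the varying sets $E^{h}_{t_h}$ still satisfy all hypotheses of Theorem~\ref{convergence}.
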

\begin{proof}
The lemma  is already established in the two-dimensional case in  \cite[Lemma 3.4]{FJM}.
Again, the proof of the general case is analogous to the proof of  \cite[Lemma 3.4]{FJM} 
with the only essential change is that we use  Theorem \ref{convergence} instead of  \cite[Lemma 3.2]{FJM}.
Therefore, we only sketch the proof. Besides Theorem \ref{convergence}, we also use some 
basic properties of approximate flat flows proven in  \cite{FJM}.

We begin by fixing  times $0<T_1<T_2$. Then by Definition \ref{statdef} and \cite[Proposition 2.3]{FJM}  we have a decreasing sequence $(h_i)_{i=1}^\infty$, with $0<h_i <1$ and $h_i$ converging to zero, 
such that the approximate flat flows $(E^{h_i}_t)_{t\geq0}$, with constant forcing $f \equiv \Lambda$ and starting from $E_0$ satisfy
\beq
\label{statlemma1}
\lim_{i \rightarrow \infty} \sup_{t \in [T_1,T_2]} |E_0 \Delta E^{h_i}_t| = 0.
\eeq
Moreover, since the forcing term is constant, it follows from the argument in the proof of \cite[Proposition 2.4]{FJM} that there is $C \in \R_+$, independent of $h$, such that for every $t \in [T_1,T_2]$ with $t >h$ it holds 
\beq
\label{statlemma2}
\int_h^t \int_{\pa^* E^h_s} |H_{E^h_s} - \Lambda|^2 \ \d \Ha^{n-1} \d s \leq C\left[(P(E_0)-P(E^h_t)) + (|E^h_t|-|E_0|)\right].
\eeq
Now \eqref{statlemma1} and \eqref{statlemma2}  imply
$
\limsup_{i \rightarrow \infty} P(E_t^{h_i}) \leq P(E_0),
$
for every $t \in [T_1,T_2]$. On the other hand, by \cite[Proposition 2.3 ]{FJM} there is a radius $R >0$, independent of $i$, such that $E^{h_i}_t \subset B(0,R)$ for every $t \in [0,T_2]$. Then by the lower semi-continuity of the perimeter and by the previous estimate we have  $P(E^{h_i}_t) \rightarrow P(E_0)$ for every $t \in [T_1,T_2]$. Thus,  \eqref{statlemma2} yields
\[
\lim_{i \rightarrow \infty} \int_{T_1}^{T_2} \int_{\pa^* E^{h_i}_t} |H_{E^{h_i}_t} - \Lambda|^2 \ \d \Ha^{n-1} \d t = 0
\]
and further, by the mean value theorem, we find times $t_i \in (T_1,T_2)$ such that
\[
\lim_{i \rightarrow \infty} \int_{\pa^* E^{h_i}_{t_i}} |H_{E^{h_i}_{t_i}} - \Lambda|^2 \ \d \Ha^{n-1}= 0.
\]
Since $P(E^{h_i}_{t_i})$ are uniformly bounded, we deduce by the previous estimate that   $H_{E^{h_i}_{t_i}} \rightarrow \Lambda$ in the distributional sense. We have also 
$|E_0 \Delta  E^{h_i}_{t_i}| \rightarrow 0 $ and $P(E^{h_i}_{t_i}) \rightarrow P(E_0)$ so the claim follows from  Theorem \ref{convergence}.
\end{proof}

Now, the non-trivial direction of Theorem \ref{thm2} is a rather straightforward consequence of  Lemma \ref{statlemma} and Theorem \ref{thm1}, since the latter guarantees that a critical set having two tangential balls
cannot be stationary. The reasoning is exactly same as in the planar case, but for the sake of completeness we sketch the argument here. 
To this aim, let $E_0$ be a finite union of balls with equal radius $r$ containing a union of two tangential balls, say 
\[
E'_0 =B(x_1,r) \cup B(x_2,r).
\]
Let $(E_t)_{t\geq 0}$ be any flat flow with a bounded forcing $f$ starting from $E_0$.
By applying the second claim of Proposition \ref{comp} we find a
a flat flow $(E'_t)_{t\geq 0}$ with the forcing $f-1$ starting from $E'_0$
such that $|E'_t \setminus E_t| = 0$ for every $t \geq 0$.
By Theorem \ref{thm1} we have
$|B((x_1+x_2)/2, ct) \setminus E'_t| = 0$, and thus  $|B((x_1+x_2)/2, ct) \setminus E_t| = 0$  for  some $c \in \R_+$ and  for all small $t>0$. On the other hand, it clearly holds
$|B((x_1+x_2)/2, ct) \setminus E_0| > 0$. Therefore, we deduce that  $|E_0 \Delta E_t|>0$ for all small $t>0$. Thus, $E_0$ cannot be stationary.


\section{Proof of Theorem \ref{thm1}}


\begin{proof}[Proof of Theorem \ref{thm1}]
Let $E_0$ be a union of two tangential  balls of radius $r$. We may assume that 
$E_0 = B(-re_1,r) \cup B(re_1,r)$. Recall that for $0<h \leq 1$ an approximative sequence 
$(E^{h,i})_{i=0}^\infty$ is defined recursively by setting first $E^{h,0} = E_0$ and for each $i=0,1,2,\ldots$
the set $E^{h,i+1}$ is chosen to be a minimizer of $\mathcal F_{h,\bar f(h,i)}( \ \cdot \ , E^{h,i})$ defined in \eqref{functional2}, where
$\bar f(h,i) = \fint_{ih}^{(i+1)h} f(t) \ \d t$. Recall also that  $f:[0,\infty) \rightarrow \R$ is measurable and satisfies  \eqref{condition}
(and hence $|\bar f(h,i)| \leq C_0$). Now each $E^{h,i}$, with $i \geq 1$, satisfies the Euler-Lagrange equation \eqref{EulerLagrange}
with the constant $\Lambda = \bar f (h,i-1)$. Again, the corresponding approximative flat flow $(E^h)_{t\geq 0}$  is given by \eqref{approxflow}.

Our aim is to show that for a
time interval $(0,\delta]$, with $\delta$ small enough, we may construct barrier sets $G^{h,i} \subset E^{h,i}$ for $i=1,\dots, \lfloor \delta / h \rfloor + 1$  such that for every $h \leq t \leq \delta$ the barrier $G^{h,\lfloor t / h \rfloor}$ contains  a simply connected set $A_t$   defined as 
\beq
\label{goal}
A_t = \mathbf C(c_1t,r) \cup B(-re_1,r-c_2t) \cup B(re_1,r-c_2t),
\eeq
with some $c_1,c_2 \in \R_+$, depending only on $n$, $r$ and $C_0$, provided that $h$ is small enough.
Now, if $(E_t)_{t\geq 0}$
is any cluster flow, then $A_t \subset E_t^h$ implies $|A_t \setminus E_t| = 0$ for every $t \in (0,\delta)$ and further, 
since $E_t \in X_n$, this means $\mathrm{int}(A_t) \subset E_t$. The rest of the claim follows trivially from this. 


We first note that it is easy to see that the balls $B(\pm re_1,r-c_2t)$ are contained in $E_t$. 
Indeed, by possibly 
replacing $C_0$ with $\max\{C_0,4(n-1)/r\}$, we may assume that $r/4\geq (n-1)/C_0$.
Then by (i) of Proposition \ref{comp} and Proposition \ref{ballminim}
we find $\eta = \eta(n,r/2,C_0) \in \R_+$ and $0<h_0 = h_0(n,r/2,C_0) \leq 1$ such that for every $0<h \leq h_0$
the following implication holds
\beq
\label{step01}
B(x,\tilde r) \subset E^{h,i} \ \text{with} \ \tilde r \geq r/2 \implies \bar B(x,\tilde r - \eta h) \subset E^{h,i+1}.
\eeq

We split the proof into three steps.

\medskip

\textbf{Step 1:} We prove that there is a positive $\alpha = \alpha(n,r,C_0)$, such that 
the set $E^{h,1}$ contains the cylinder $\mathbf C(\alpha h^\frac14,\alpha h^\frac12)$ provided that $h$ is sufficiently small. 

To this aim, let $\tau >0$ be a small number, which we will fix later.
We use $C$ and $c$ for positive constants which may change from line to line but always depend only on $n$, $r$ and $C_0$. We also use a further shorthand notation $\mathbf C_{h,\tau}$ for the cylinder $\mathbf C(\tau h^\frac14,\tau h^\frac12)$

 By \eqref{step01} the balls
$\bar B(\pm re_1, r-\eta h)$ are contained in  $E^{h,1}$ provided that $h$ is small enough.
Again, assuming $\tau \leq r/2$ and $h$ to be sufficiently small we have 
\begin{align*}
(r-\eta h)^2 -(r-\tau h^\frac12)^2 
&= (2r-\eta h - \tau h^\frac12)(\tau-\eta h^\frac12)h^\frac12 \\
&> \frac{\tau r}{2} h^\frac12  \geq \tau^2 h^\frac12.
\end{align*}
Thus the heads of the cylinder $\mathbf C_{h,\tau}$, which are the vertical parts of the boundary, 
are contained in $B(\pm re_1, r-\eta h)$ and therefore, in turn, in the set $E^{h,1}$. Since $\partial E^{h,1}$ is $C^2$ (possibly
up to a closed singular part of Hausdorff-dimension at most $n-8$), then by a foliation and continuity argument we may assume that 
$\Ha^{n-1}(\partial \mathbf C_{h,\tau} \cap \partial E^{h,1})=0$. Otherwise, we would choose $\tau/2 \leq \tilde \tau < \tau$ such 
that the heads of the cylinder $\mathbf C_{h,\tilde \tau}$ are contained in $B(\pm re_1, r-\eta h)$ and $\Ha^{n-1}(\partial \mathbf C_{h,\tilde\tau} \cap \partial E^{h,1})=0$. This implies
\beq
\label{step11}
P (\mathbf C_{h,\tau} \cup E^{h,1}) = \Ha^{n-1}(\partial \mathbf C_{h,\tau} \setminus E^{h,1}) + 
\Ha^{n-1}(\partial E^{h,1} \setminus \mathbf C_{h,\tau}). 
\eeq
Again, we have the following estimates 
\begin{align}
\label{step12}
\Ha^{n-1}(\partial  \mathbf C_{h,\tau}  \setminus E^{h,1}) 
&\leq C \tau^{n-1} h^\frac n4, \\
\label{step13}
\bar d_{E_0} &\leq C \tau^2 h^\frac12 \ \ \text{in} \ \ \mathbf C_{h,\tau}  \ \ \text{and} \\
\label{step14}
|\mathbf C_{h,\tau}| & \leq C\tau^n h^\frac{n+1}{4}.
\end{align}
We show that 
$
\mathbf C(\tau h^\frac14/2,  \tau h^\frac12)\subset E^{h,1}
$
which implies the claim of Step 1 by choosing $\alpha=\tau/2$.  Suppose by contradiction that this does not hold.  We first notice that $E_0 = \mathbf C(g,[-2r,2r])$ with a continuous $g$ having the zero value at the endpoints and therefore by Proposition \ref{symmetry} it holds $E^{h,1} = (E^{h,1})^*$, i.e., $E^{h,1}$ Schwarz symmetric with respect to $x_1$-axis. Using this and the fact that the heads of $\mathbf C_{h,\tau}$ are in $E^{h,1}$, we conclude 
\[
\Ha^{n-1}(\partial E^{h,1} \cap\mathbf C_{h,\tau}) \geq c \tau^{n-1}h^\frac{n-1}{4}.
\]
We use the set $\mathbf C_{h,\tau} \cup E^{h,1}$ as a competitor in the energy $\mathcal F_{h,\bar f(h,0)} ( \ \cdot \ ,E_0)$.  By using the previous estimate as well as \eqref{step11}, \eqref{step12}, \eqref{step13} and \eqref{step14} and assuming 
$h$ to be small enough we estimate
\begin{align*}
\mathcal F_{h,\bar f(h,0)} (\mathbf C_{h,\tau} \cup E^{h,1},E_0) 
&= \mathcal F_{h,\bar f(h,0)} (E^{h,1},E_0)+\Ha^{n-1}(\partial \mathbf C_{h,\tau} \setminus E^{h,1}) 
- \Ha^{n-1}(\partial E^{h,1} \cap \mathbf C_{h,\tau}) \\
&+\frac1h\int_{\mathbf C_{h,\tau} \setminus E^{h,1}} \bar d_{E_0} \ \d x -  \bar f(h,0)|\mathbf C_{h,\tau} \setminus E^{h,1}|\\
&\leq \mathcal F_{h,\bar f(h,0)} (E^{h,1},E_0)+ C \tau^{n-1} h^\frac n4 
-  c \tau^{n-1}h^\frac{n-1}{4} \\
&+ C(\tau^2 h^{-\frac12} +1)|\mathbf C_{h,\tau}|\\
&\leq \mathcal F_{h,\bar f(h,0)} (E^{h,1},E_0)+ C \tau^{n-1} h^\frac n4 
-  c \tau^{n-1}h^\frac{n-1}{4} + C \tau^{n+2} h^\frac{n-1}{4}\\
&\leq \mathcal F_{h,\bar f(h,0)} (E^{h,1},E_0)+ \frac{c}{2}\tau^{n-1} h^\frac{n-1}4 
-  c \tau^{n-1}h^\frac{n-1}{4} + C \tau^{n+2} h^\frac{n-1}{4}\\
&= \mathcal F_{h,\bar f(h,0)} (E^{h,1},E_0)+
\tau^{n-1}h^\frac{n-1}{4} \left(C\tau^2-\frac{c}{2}\right).
\end{align*}
Thus, by choosing $\tau < \sqrt{c/(2C)}$ we have 
$\mathcal F_{h,\bar f(h,0)} (\mathbf C_{h,\tau} \cup E^{h,1},E_0) <\mathcal F_{h,\bar f(h,0)} ( E^{h,1},E_0)$ which  contradicts the minimality of the set $E^{h,1}$. Hence, we  have $\mathbf C(\alpha h^\frac14,\alpha h^\frac12) \subset E^{h,1}$ for $\alpha = \tau/2$.

\bigskip

\textbf{Step 2:} 
We proceed by  constructing candidate family  for the  barrier sets $G^{h,i}$, for every $i=1,\dots, \lfloor \delta / h \rfloor + 1$ and small $\delta$, which  satisfy for every $h \leq t \leq \delta$  the condition $A_t \subset G^{h,\lfloor t / h \rfloor}$, where $A_t $ is defined in \eqref{goal}. To be more precise, we will define positive numbers $d_{h,i}$, $l_{h,i}$ and $r_{h,i}$ (such that $l_{h,i}$ increases and $r_{h,i}$ decreases linearly in discrete  time  and $r_{h,1} \rightarrow r$
as $h \rightarrow 0$) and  suitable convex and positive functions $\varphi_{h,i}:[-d_{h,i},d_{h,i}] \rightarrow \R$ with $ l_{h,i}/2 \leq \varphi_{h,i} \leq l_{h,i}$.
Then we define  the barrier sets $G^{h,i}$, see Figure \ref{fig}, as the union 
\[
G^{h,i} = \mathbf C(\varphi_{h,i},[-d_{h,i},d_{h,i}]) \cup \bar B(-re_1, r_{h,i}) \cup \bar B(re_1, r_{h,i}).
\]
\begin{figure}[h!]
\label{fig}
  \centering
    \includegraphics[scale=3]{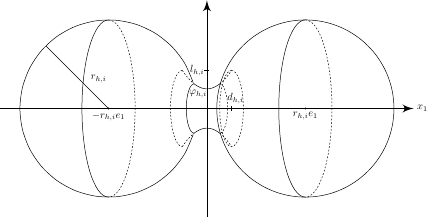}
\caption{A visualization of the barrier set $G^{h,i}$.}
\end{figure}
Here it follows from the selection of the parameters and the functions that the heads of the neck $\mathbf C(\varphi_{h,i},[-d_{h,i},d_{h,i}])$ are contained
in  the balls  $B(\pm re_1, r_{h,i})$ so $G^{h,i}$ will contain a simply connected set
\[
\mathbf C(l_{h,i}/2,r) \cup \bar B(-re_1, r_{h,i}) \cup \bar B(re_1, r_{h,i}).
\]
and hence the behavior of $l_{h,i}$ and $r_{h,i}$ yield the condition $A_t \subset G^{h,\lfloor t / h \rfloor}$.

To this end, let $0< \delta < 1$ be a sufficiently small number which will ultimately depend only on $n$, $r$ and $C_0$. 
Note that it holds   $hi \leq 2\delta$ for all $i=1,\dots, \lfloor \delta / h \rfloor + 1$ when $h$ is small. We begin by setting $r_{h,i} = r - \eta hi$.
Now $r_{h,i} \geq r - 2 \eta \delta$ so by assuming $\delta$ to be small enough we have $r_{h,i} \geq r/2$ and hence
thanks to \eqref{step01}
\beq
\label{step21}
\bar B(-re_1, r_{h,i}) \cup \bar B(re_1, r_{h,i}) \subset E^{h,i}.
\eeq
Again, set $\Lambda_0 = \max\{4\eta^2,2^9(n-2)^2,1\}$
and for each $i=1,\dots, \lfloor \delta / h \rfloor + 1$ define
\beq
\label{step21b}
 l_{h,i}= \Lambda_0h(i-1) + \alpha h^\frac14 \ \ \text{and} \ \ d_{h,i} = 2 \eta h(i-1) + \alpha h^\frac12. 
\eeq
It follows from the choice of $\Lambda_0$ that for $\delta$ small enough ($\delta \leq \Lambda_0^{-2}$) it holds
\beq
\label{step22}
\Lambda_0^\frac{1}{2}d_{h,i} \leq l_{h,i}.
\eeq
Moreover, $l_{h,i} \leq \Lambda_0 \delta + \alpha \delta^\frac14$ so by decreasing $\delta$ we may assume that  $d_{h,i}$ and $l_{h,i}$ are as small as we need.
Note that  by Step 1 we have 
\beq
\label{step23}
\mathbf C(l_{h,1}, d_{h,1}) \subset E^{h,1}.
\eeq
Further, by replacing $\alpha$ with $\min\{\alpha,r/4\}$, if necessary, we have
\begin{align*}
r_{h,i}^2 - (r-d_{h,i})^2 &= (r_{h,i} + r - d_{h,i})(r_{h,i} - r + d_{h,i}) \\
&= (2r - \eta h(3i-2) - \alpha h^\frac12)(\eta h (i-2) + \alpha h^\frac12 ) \\
&\geq r(\eta h (i-2) + \alpha h^\frac12 ) \\
&= r\eta h (i-1) + r(\alpha - \eta h^\frac12)  h^\frac12  \\
&\geq r\eta h (i-1) + \frac{\alpha r}{2}  h^\frac12  \\
&\geq 2\delta\Lambda_0^2 h (i-1) + 2\alpha^2  h^\frac12  \\
&\geq 2\Lambda_0^2 h^2 (i-1)^2 + 2\alpha^2  h^\frac12  \\
&\geq \left(\Lambda_0 h(i-1) + \alpha  h^\frac14\right)^2 = l_{h,i}^2,  \\
\end{align*}
when $\delta$ is small. Therefore, by Pythagorean theorem
\beq
\label{step24}
\{(\pm d_{h,i},x') \in \R^n : x' \in \bar B^{n-1}(0,l_{h,i})\} \subset \bar B(-re_1, r_{h,i}) \cup \bar B(re_1, r_{h,i}),
\eeq
i.e., the heads of the cylinder $\mathbf C(l_{h,i}, d_{h,i})$ are contained in the balls $\bar B(\pm re_1, r_{h,i})$.

We define for each $i=1,\dots, \lfloor \delta / h \rfloor + 1$   convex function
$
\varphi_{h,i} : [-d_{h,i+1},d_{h,i+1}] \rightarrow \R
$
by setting
\[
\varphi_{h,i}(t)= \frac{a_{h,i}}{2}(t^2-d_{h,i}^2) + l_{h,i},
\]
where $a_{h,i} = \Lambda_0^\frac12 / l_{h,i}$. Note that by \eqref{step22} we have $a_{h,i}d_{h,i} \leq 1$ and further $a_{h,i}d_{h,i}^2 \leq l_{h,i} $. Thus, $\varphi_{h,i}$ is $1$-Lipschitz and 
\beq
\label{step25}
\varphi_{h,i} \geq \varphi_{h,i}(0) = l_{h,i} - \frac{1}{2}a_{h,i}d_{h,i}^2 \geq \frac{l_{h,i}}{2}.
\eeq
Recall, that we set $G^{h,i}$ as the union
\[
G^{h,i} = \mathbf C(\varphi_{h,i},[-d_{h,i},d_{h,i}]) \cup \bar B(-re_1, r_{h,i}) \cup \bar B(re_1, r_{h,i}).
\]
By
 \eqref{step24} and \eqref{step25} the barrier $G^{h,i}$ contains the simply-connected set
\[
\mathbf C(l_{h,i}/2, r) \cup B(-re_1, r_{h,i}) \cup B(re_1, r_{h,i})
\]
and thus recalling $l_{h,i} = \Lambda_0 h(i-1) + \alpha h^\frac14$ and $r_{h,i} = r - \eta hi$ we find $c_1,c_2 \in \R_+$, depending only on $n$, $r$ and $C_0$, such 
that for every $h \leq t \leq \delta$ the barrier $G^{h,\lfloor t / h \rfloor}$ contains the set $A_t$ defined in \eqref{goal} with the
constants $c_1,c_2$.


\bigskip

\textbf{Step 3:}
We finish the proof by showing that  each barrier  $G^{h,i}$ constructed in the previous step is actually contained in $E^{h,i}$.
First, we conclude from \eqref{step21b}, \eqref{step22} and $\Lambda_0\geq 2\eta$ that when $\delta$ is small enough then for every $i=2,\dots,\lfloor \delta/h \rfloor+1$ it holds
\beq
\label{step26}
|\varphi_{h,i}-\varphi_{h,i-1}| \leq 2\Lambda_0 h \ \ \text{on} \ \ [-d_{h,i-1},d_{h,i-1}].
\eeq
Further, using  the  fact that $\varphi_{h,i}$ is $1$-Lipschitz  and $r_{h,i-1}  \geq r_{h,i}$ we obtain
\beq
\label{step26b}
G^{h,i}  \subset  \{ x \in \R^n : \bar d_{G^{h,i-1}}(x) \leq 4\Lambda_0 h \}. 
\eeq
By \eqref{step21} and \eqref{step23} we have
$
G^{h,1} \subset E^{h,1}.
$
Thus, we argue by induction. Assume that for $i=2,\dots,\lfloor \delta/h \rfloor+1$ it holds $G^{h,i-1} \subset E^{h,i-1}$. 
By  \eqref{step21b} and \eqref{step25} we have for small $\delta$
\[
\varphi_{h,i} - 2\Lambda_0 h \geq \frac{l_{h,i}}{2} - 2\Lambda_0 h \geq  \frac{\alpha}{2}h^\frac{1}{4} - 2\Lambda_0 h > 0
\]
and hence the set
$
\mathbf C(\varphi_{h,i} - 2\Lambda_0 h,[-d_{h,i},d_{h,i}])
$
is well-defined.
Again, by \eqref{step26} and $\varphi_{h,i} - 2\Lambda_0h \leq l_{h,i-1}$ it holds 
\beq
\label{step26c}
\mathbf C(\varphi_{h,i} - \Lambda_0 h,[-d_{h,i},d_{h,i}]) \subset G^{h,i-1}.
\eeq

Next, we define an auxiliary set  $\tilde G^{h,i} \subset  G^{h,i}$ as
\[
\tilde G^{h,i} = \mathbf C(\varphi_{h,i} - 2\Lambda_0 h,[-d_{h,i},d_{h,i}])  \cup \bar B(-re_1, r_{h,i}) \cup \bar B(re_1, r_{h,i}).
\]
Then by the induction assumption, \eqref{step26c} and  $r_{h,i-1}  \geq r_{h,i}$ we have $\tilde G^{h,i} \subset E^{h,i-1}$
and therefore by  \eqref{distance}
\beq
\label{step27}
\{x \in \tilde G^{h,i} : \dist(x,\partial \tilde G^{h,i}) > \gamma h^\frac12\} \subset E^{h,i}.
\eeq
Since the function $t \mapsto \max\{s: \{t\}\times \bar B^{n-1}(0,s) \subset \tilde G^{h,i}\}$ 
is increasing in $[0,r]$, decreasing in $[-r,0]$ and $ \tilde G^{h,i}$ is a solid of revolution, then for any 
\[
x \in  \mathbf C(\varphi_{h,i} - 2 \Lambda_0 h,[-d_{h,i},d_{h,i}]) \setminus (\bar B(-re_1, r_{h,i}) \cup \bar B(re_1, r_{h,i}))
\] 
the closest point $y \in \partial \tilde G^{h,i}$, i.e. $\dist(x,\partial \tilde G^{h,i}) = |x-y|$, must lie on 
\[
\partial \mathbf C(\varphi_{h,i} - 2\Lambda_0 h,[-d_{h,i},d_{h,i}])  \setminus \left(\bar B(-re_1, r_{h,i}) \cup \bar B(re_1, r_{h,i})\right). 
\]
Let us write $x = (x_1, x')$ and $y = (y_1,y')$. Since
 $\varphi_{h,i}$ is a $1$-Lipschitz function, then
\begin{align}
\notag
|\varphi_{h,i}(x_1)- |x'|| 
&\leq |\varphi_{h,i}(x_1)-\varphi_{h,i} (y_1)| + |\varphi_{h,i} (y_1)-|x'|| \\
\notag
&\leq |x_1-y_1| + ||y'|-|x'|| \\
\label{step27b}
&\leq 2|x-y| = 2\dist(x,\partial \tilde G^{h,i}).
\end{align}
We have $\gamma h^\frac12 > 2\Lambda_0 h$ and $\alpha h^\frac14 > 12 \gamma h^\frac12 $,
provided that $\delta$ is small, and thus by \eqref{step21b} and \eqref{step25}
\beq
\label{step28}
\frac{l_{h,i}}{4} \leq \varphi_{h,i} -3\gamma h^\frac12 < \varphi_{h,i} - 2\Lambda_0 h- 2\gamma h^\frac12  \ \ \text{on} \ \ [-d_{h,i},d_{h,i}].
\eeq
Therefore, it follows from \eqref{step21}, \eqref{step27}, \eqref{step27b} and \eqref{step28}  that
the set
$\mathbf C(\varphi_{h,i} - 3\gamma h^\frac12,[-d_{h,i},d_{h,i}])$
is well-defined and contained in $E^{h,i}$.

We argue by contradiction and assume that  $G^{h,i}$ is not 
contained in $E^{h,i}$. Since 
\[
\mathbf C(\varphi_{h,i} - 3\gamma h^\frac12,[-d_{h,i},d_{h,i}]) \subset E^{h,i},
\]
we may lift up the graph of $\varphi_{h,i} - 3\gamma h^\frac12$ until it touches the boundary $\partial E^{h,i}$. To be more precise,  by a continuity argument and \eqref{step21} there is $0<\tau < 3\gamma h^\frac12$ such that
\[
\mathbf C(\varphi_{h,i} - \tau,[-d_{h,i},d_{h,i}]) \subset E^{h,i} 
\]
and there is a point $z \in \Gamma \cap \partial E^{h,i}$, where
\[
\Gamma =  \{ (x_1,x') \in \R^n : x_1 \in (-d_{h,i},d_{h,i}) , x' \in \partial B^{n-1}(0,\varphi_{h,i}(x_1) -\tau)\}.
\]
In particular, the boundary $\partial E^{h,i}$ satisfies interior ball condition at $z$  and thus  $z$ belongs to the regular part of $\partial E^{h,i}$. Hence, by the comparison principle we have
$H_{ E^{h,i}}(z) \leq H_\Gamma(z)$, where $H_\Gamma$ is chosen to be compatible with the inside-out orientation of 
\[
\mathbf C(\varphi_{h,i} - \tau,[-d_{h,i},d_{h,i}]).
\]
Recalling \eqref{revolutionMC}, \eqref{step28}, $a_{h,i}d_{h,i} \leq 1$ and the choice of $\Lambda_0$ we estimate  
\begin{align*}
H_\Gamma(z)
&= - \frac{\varphi_{h,i}''(z_1)}{(1+(\varphi_{h,i}'(z_1))^2)^\frac32} + 
\frac{1}{(1+(\varphi_{h,i}'(z_1))^2)^\frac12} \frac{(n-2)}{\varphi_{h,i}(z_1)-\tau} \\
&= - \frac{a_{h,i}}{(1+(a_{h,i}z_1)^2)^\frac32} + \frac{1}{(1+(a_{h,i}z_1)^2)^\frac12}\frac{(n-2)}{\varphi_{h,i}(z_1)-\tau}  \\
&\leq - \frac{a_{h,i}}{2^\frac32} + \frac{4(n-2)}{l_{h,i}} \\
&= \frac{2^\frac72(n-2) - \Lambda_0^\frac12}{2^\frac32l_{h,i}} \leq -\frac{\Lambda_0^\frac12}{2^\frac52l_{h,i}}.
\end{align*}
Thus, by choosing $\delta$ to be small enough we have $H_\Gamma(z) \leq -(5\Lambda_0 + C_0)$. Then the Euler-Langrange equation
\eqref{EulerLagrange} for $E^{h,i}$ and $H_{ E^{h,i}}(z) \leq H_\Gamma(z)$ yield
\[
\bar d_{G^{h,i-1}}(z) = -H_{ E^{h,i}}(z)h + \bar f (h,i-1)h \geq 5\Lambda_0 h.
\]
However, by the construction we have $z \in G^{h,i}$ and thus the above  contradicts  \eqref{step26b}. Hence, we have $G^{h,i} \subset E^{h,i}$ for every $i=1,\dots, \lfloor \delta / h \rfloor + 1$. 
\end{proof}


\section*{Acknowledgments}
The research was supported by the Academy of Finland grant 314227. 

\end{document}